\newtheorem{theorem}{Theorem}
\newtheorem{dfn}{Definition}
\newtheorem{coro}{Corollary}
\newtheorem{lemma}{Lemma}
\newtheorem{example}{Example}
\newcommand{\te}{\theta}
\author{Fatmanur Gursoy, Elif Segah Oztas, Irfan Siap\\
	\small{Department of Mathematics,}\\
	\small{ Yildiz Technical University, Turkey}\\  \small{(fatmanur@yildiz.edu.tr, esoztas@yildiz.edu.tr, irfan.siap@gmail.com)}}
\begin{document}
\title{Reversible DNA Codes Using Skew Polynomial Rings\footnote{A part of this study is presented in The International Conference on Coding theory and Cryptography (ICCC2015, Algeria).}}
\maketitle
\begin{abstract}

In this study we determine the  structure of reversible DNA codes obtained from skew cyclic codes. We show that the generators of such DNA codes enjoy some special properties. We study the structural properties of such family of codes and we also illustrate our results with examples.\\
\textit{Keywords:} Skew Cyclic Codes, DNA codes, Reversible Codes.\\

\end{abstract}
\section{Introduction}
 DNA is a molecule that carries most of the genetic instructions for the functions of cells. DNA sequences consist of Adenine (A), Guanine (G), Cytosine (C), Thymine (T) nucleotides. The bases (nucleotides) govern  DNA double strings with property of Watson-Crick complement (WCC).  According to WCC, A and G  are complement of T and C, respectively. This is shown as  $\text{A}^\text{c}=$T, $\text{T}^\text{c}=$A, $\text{C}^\text{c}=$G and $\text{G}^\text{c}=$C.

 The interest on the structure of DNA in computations was introduced by Adleman \cite{adleman94} who solved a famous NP-hard problem by using DNA molecules. Here, WCC property of DNA was used to solve the problem.
There are many of studies about DNA codes and DNA computing. Since we focus specifically on the skew cyclic codes and DNA codes, we mention some recent and related papers about  DNA codes.
 In \cite{tehergf4}, DNA codes are generated by additive cyclic codes over $F_4$. In \cite{tahersiapsim} and \cite{siaptaherr} authors used the ring $F_2[u]/(u^2-1)$ and obtained cyclic reversible DNA codes. In \cite{yildizsiap}, DNA double bases are considered by using the ring $F_2[u]/(u^4-1)$  with 16 elements but authors restricted the length of DNA codes with odd integers. In \cite{ise} and  \cite{Gse} (generalized form of \cite{ise}), DNA double bases are used with $F_{16}$ and DNA $2s$-bases by special $4^s$-power tables. In \cite{agul} DNA codes are generated over $F_4+vF_4$ ($v^2=v$).

In this study, skew cyclic codes are used to obtain reversible DNA codes and solve the reversibility problem.  To explain the reversibility  problem, we assume that  $(a_1,a_2,a_3)$  is a codeword corresponding to the DNA string ACAGTC where $a_1 \rightarrow $AC, $a_2 \rightarrow $AG, $a_3 \rightarrow $TC. Then, the reverse of   $(a_1,a_2,a_3)$ is $(a_3,a_2,a_1)$  and it corresponds to TCAGAC. But TCAGAC is not the reverse of ACAGTC. We solve this problem by using $\theta$-palindromic polynomial concept which is introduced in this work  and  properties of  divisors of the polynomial $x^n-1$ in the skew polynomial ring  $F_{4^{2s}}[x;\theta]$ for the first time. In previous studies, DNA codes were considered over commutative rings.

\section{Preliminaries and Definitions}

In this section we present some basic definitions and some properties  of skew cyclic codes and DNA codes.
\begin{dfn}
	Let $C$ be a code of length $n$ over $F_q$. If $c^r=(c_{n-1},c_{n-2},\ldots,c_1,c_0)\in C$ for all $c=(c_0,c_1,\ldots,c_{n-1})\in C$, then $C$ is called a reversible code.
\end{dfn}

Let $\theta$ be an automorphism over $F_q$. Then, the set of polynomials $F_q[x;\theta]=\{a_0+a_1x+...+a_{n-1}x^{n-1}|a_i\in F_q, n\in \mathbb{N}\}$ is the skew polynomial ring over $F_q$ where addition is the usual addition of polynomials and the multiplication is defined by $xa=\theta(a)x$ ($a\in F_q$)\cite{Mc}.
A skew cyclic code is defined to be a linear code $C$ of length $n$ over $F_q$ which  satisfies the property that $(\te(c_{n-1}),\te(c_0),...,\te(c_{n-2}))\in C$, for all $(c_0,c_1,...,c_{n-1})\in C$ \cite{skew cyclic}.
\begin{align*}
	\varphi: F_{q}^n & \rightarrow F_q[x;\theta]/(x^n-1) \\
	c=(c_0,c_1,\ldots,c_{n-1}) &\rightarrow c(x)=c_0+c_1x+\ldots+c_{n-1}x^{n-1}
\end{align*}
In polynomial representation of a code of length $n$, we associate a codeword $c$ with the element $\varphi(c)=c(x)$ of $F_q[x;\theta]/(x^n-1)$. In this way a skew cyclic code $C$ of length $n$ over $F_q$ corresponds to a left ideal of the quotient ring $F_q[x;\theta]/(x^n-1)$, if the order of $\theta$, say $m$, divides $n$ \cite{skew cyclic}. If $m$ does not divide $n$, then $F_q[x;\theta]/(x^n-1)$ is not a ring anymore. In this case the skew cyclic code $C$ can be  considered as a left $F_q[x;\te]$-submodule of  $F_q[x;\te]/(x^n-1)$ \cite{I.siap}. In both cases $C$ is generated by a monic polynomial $g(x)$ which is a right divisor of $x^n-1$ in $F_q[x;\te]$ and denoted by $C=\langle g(x)\rangle$. Moreover if $m$ and $n$ are relatively prime, then $C$ is a cyclic code over $F_q$ \cite{I.siap}.

Naturally skew cyclic codes are linear codes over $F_q$. A linear code of length $n$ with dimension $k$ and minimum Hamming distance $d$ is denoted by $[n,k,d]$ code.

In \cite{ise} authors introduced $4$-lifted polynomials over $F_{16}$ to obtain reversible DNA codes. They used Table  \ref{dna table} which maps each element of $F_{16}$ and its $4$th power to the DNA pairs which are reverses of each other. For example
 $\alpha^2$ is mapped to the DNA pair GC and    $\alpha^8$ is mapped to the DNA pair CG.

 After that in \cite{Gse} they gave an algorithm  building $4^s$-table for the  correspondence  between $F_{4^{2s}}$ and the DNA 2s-bases and by using the $4^s$-lifted  polynomials they obtained reversible DNA codes.
According to this algorithm we define a map; 
\begin{align*}
\tau:F_{4^{2s}} & \rightarrow \{\text{A,T,G,C}\}^{2s} \\
\beta & \rightarrow (b_0,b_1,...,b_{2s-1}).
\end{align*}
This map can be naturally extended to a map $\phi$ from $F_{4^{2s}}^n$ to $\{\text{A,T,G,C}\}^{2sn}$ as follows; $\phi(c_0,c_1,...,c_{n-1})
= (\tau(c_0),\tau(c_1),\ldots, \tau(c_{n-1}))$ where $c_i\in F_{4^{2s}}$, $i\in\{0, \ldots, n-1\}$.
For instance, $\phi(c_0,c_1,c_2,c_{3})=\phi(\alpha^2,\alpha^{20},$ $  \alpha^{135}, \alpha^{219})=
(\text{AAAGATACCGACTAGA})$ in $F_{256}$ where $\alpha$ is a primitive element of $F_{256}$ (by using the correspondence table in \cite{Gse}).
\begin{dfn}
Let $C\subseteq F_{4^{2s}}^n$. If $\phi(c)^r\in \phi(C)$  for all $c\in C$, then $C$ or equivalently $\phi(C)$ is called a reversible DNA code.
\end{dfn}
According to the  $4^s$-power correspondence in \cite{Gse} we observe that $\tau(\beta)$ and $\tau(\beta^{4^s})$ are reverses of each other for all $\beta \in F_{4^{2s}} $. Thus the reverse of $\phi(c_0,c_1,\ldots,c_{n-1})$ is  $\phi(c_{n-1}^{4^s},\ldots,c_1^{4^s},c_0^{4^s})$. For convenience, we will say that $(c_0,c_1,\ldots,c_{n-1})$ and  $(c_{n-1}^{4^s},\ldots,c_1^{4^s},c_0^{4^s})$ are DNA  reverses of each other. The skew polynomial ring $F_{4^{2s}}[x;\te]$ where $\te(a)=a^{4^s}$ for all $a\in F_{4^{2s}}$, studied in this paper, resolves this approach naturally and proves to be more effective as it will be seen later.
In both \cite{ise} and \cite{Gse} the dual codes of reversible DNA codes are not considered however, here we also study their duals too.

In this study we construct reversible DNA codes by using the skew
cyclic codes over $F_{4^{2s}}$. We impose some additional properties to the generator polynomials of skew cyclic codes over  $F_{4^{2s}}$ and using the correspondence algorithm given in \cite{Gse} we obtained reversible DNA codes directly.
We also showed that the dual codes of these skew cyclic codes are also reversible DNA codes.

\begin{table}
	
	\begin{center}
		
		\caption{ \cite{ise}} \label{dna table}
		\begin{tabular}{lcl}
			\hline
			Double DNA pair& $F_{16}$(multiplicative)&  additive \\
			\hline
			AA & 0 & 0 \\
			TT & $\alpha^0$ &1 \\
			AT & $\alpha^1$ &$\alpha$\\
			GC & $\alpha^2$ & $\alpha^2$\\
			AG & $\alpha^3$ & $\alpha^3$\\
			TA & $\alpha^4$ & $1+\alpha$ \\
			CC & $\alpha^5$ & $ \alpha+\alpha^2$\\
			AC & $\alpha^6$ & $\alpha^2 +\alpha^3$\\
			GT & $\alpha^7$ & $1+\alpha +\alpha^3$\\
			CG & $\alpha^8$ & $1 +\alpha^2$\\
			CA & $\alpha^9$ & $\alpha +\alpha^3$\\
			GG & $\alpha^{10}$ & $1+\alpha +\alpha^2$\\
			CT & $\alpha^{11}$ & $\alpha +\alpha^2+\alpha^3$\\
			GA & $\alpha^{12}$ & $1+\alpha +\alpha^2+\alpha^3$\\
			TG & $\alpha^{13}$ & $1 +\alpha^2+\alpha^3$\\
			TC & $\alpha^{14}$ & $1+\alpha^3$\\
		\end{tabular}
	\end{center}
\end{table}

\begin{dfn}
	Let $f(x)=a_0+a_1x+\ldots +a_tx^t$ be a polynomial of degree $t$ over  $F_q$ and $\te$ be an automorphism of $F_q$.  $f(x)$ is said to be a palindromic polynomial if $a_i=a_{t-i}$  for all $i\in  \{0,1,\ldots,t \}$,
	and $f(x)$ is said to be a $\theta$-palindromic polynomial if $a_i=\theta(a_{t-i})$  for all $i\in \{0,1,\ldots,t\}$.
\end{dfn}

\begin{dfn}\cite{note on dual}
	The skew reciprocal polynomial of $f(x)=\sum_{i=0}^{t}a_ix^i\in F_q[x,\theta]$ of degree $t$ is defined as
	$$f^R(x)=\sum_{i=0}^{t}x^i a_{t-i}=\sum_{i=0}^{t}\theta^i(a_{t-i})x^{i}.$$
	If $f(x)=f^R(x)$, then $f(x)$ is called  a skew self reciprocal polynomial.
\end{dfn}
\begin{lemma}\cite{coding with}
	Suppose that the order of $\theta$ divides $n$. Let $x^n-1=h(x)g(x)$ in $F_q[x,\theta]$ and $C$ be the skew cyclic code of length $n$ over $F_q$ generated by $g(x)$. Then, the dual of $C$ is a skew cyclic code of length $n$ generated by $h^R(x)$, i.e. $C^\perp=\langle h^R(x)\rangle$.
\end{lemma}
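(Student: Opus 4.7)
Setting $\deg g(x) = n-k$, we have $\deg h(x) = k$, $\dim C = k$, and hence $\dim C^\perp = n-k$. I would verify three things: (i) $C^\perp$ is itself a skew cyclic code; (ii) $h^R(x) \in C^\perp$; (iii) $\dim \langle h^R(x)\rangle \ge n-k$. Combined, (i)--(iii) force $C^\perp = \langle h^R(x)\rangle$ by dimension.

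For (i), let $\sigma$ denote the skew shift $\sigma(c_0,\ldots,c_{n-1}) = (\te(c_{n-1}),\te(c_0),\ldots,\te(c_{n-2}))$. A direct computation of the standard inner product yields the identity
$$\sigma(u)\cdot c \;=\; \te\bigl(u\cdot \sigma^{-1}(c)\bigr) \qquad\text{for all } u,c\in F_q^n.$$
Because $\sigma$ bijectively permutes $C$, we have $\sigma^{-1}(c)\in C$ whenever $c\in C$; hence $u\in C^\perp$ implies $\sigma(u)\cdot c=0$ for every $c\in C$, so $\sigma(u)\in C^\perp$.

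The main step is (ii). The key observation is that since $\mathrm{ord}(\te)\mid n$, the element $x^n-1$ lies in the center of the domain $F_q[x;\te]$; combined with the relation $h(x)g(x)=x^n-1$ this yields
$$(g(x)h(x))\,g(x) \;=\; g(x)(h(x)g(x)) \;=\; g(x)(x^n-1) \;=\; (x^n-1)\,g(x),$$
and right-cancellation of $g(x)$ in the domain produces $g(x)h(x)=x^n-1$ as well. Expanding the latter product, the coefficient of $x^\ell$ in $g(x)h(x)$ is $\sum_{i+j=\ell} g_j\,\te^j(h_i)$, which vanishes for $1\le \ell\le n-1$. A direct expansion of $\langle h^R(x), x^s g(x)\rangle$, substituting $(h^R)_j = \te^j(h_{k-j})$ and $(x^s g)_m = \te^s(g_{m-s})$ and reindexing $j=m-s$, shows this inner product equals $\te^s$ applied to the coefficient of $x^{k-s}$ in $g(x)h(x)$. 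Since $1\le k-s \le k<n$ for $0\le s\le k-1$, this inner product vanishes, establishing that $h^R(x)$ is orthogonal to the natural basis $\{g(x), xg(x),\ldots,x^{k-1}g(x)\}$ of $C$.

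Finally, for (iii), $h^R(x)$ has degree $k$, its leading coefficient $\te^k(h_0)$ being nonzero because $h_0 g_0$ is the constant term of $x^n-1$. Hence the shifts $h^R(x), xh^R(x), \ldots, x^{n-k-1}h^R(x)$ have distinct leading degrees $k,k+1,\ldots,n-1$ and are linearly independent in $F_q^n$, giving $\dim\langle h^R(x)\rangle \ge n-k$, which closes the argument. The delicate point, and the main nontrivial use of the hypothesis $\mathrm{ord}(\te)\mid n$ beyond well-definedness of the quotient ring, is the passage from $h(x)g(x)=x^n-1$ to $g(x)h(x)=x^n-1$; the remainder of the argument is essentially bookkeeping.
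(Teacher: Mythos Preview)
The paper does not supply its own proof of this lemma: it is quoted from Boucher--Ulmer \cite{coding with} and stated without argument. Your proposal therefore cannot be compared against a proof in the present paper, but it stands on its own as a correct and complete argument.

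Each of your three steps checks out. The duality identity $\sigma(u)\cdot c=\te(u\cdot\sigma^{-1}(c))$ in (i) is verified by direct index-shifting, and together with bijectivity of $\sigma$ on the finite set $C$ it gives skew-cyclicity of $C^\perp$. In (ii), the centrality of $x^n-1$ (from $\te^n=\mathrm{id}$) and cancellation in the Ore domain $F_q[x;\te]$ legitimately convert $h(x)g(x)=x^n-1$ into $g(x)h(x)=x^n-1$; your reindexing then correctly identifies $\langle h^R(x),\,x^s g(x)\rangle$ with $\te^s$ of the coefficient of $x^{k-s}$ in $g(x)h(x)$, which vanishes for $0\le s\le k-1$. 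Step (iii) is straightforward since $\te^k(h_0)\ne 0$.

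This is essentially the standard argument one finds in the source reference, rephrased to avoid appealing to module-theoretic machinery (annihilators, Morita-type dualities) in favour of an explicit inner-product computation. The only point worth stating more explicitly is that (i) is needed not merely to conclude that $C^\perp$ is skew cyclic, but to pass from the single membership $h^R(x)\in C^\perp$ established in (ii) to the containment $\langle h^R(x)\rangle\subseteq C^\perp$ of the entire left submodule; you use this implicitly in the final dimension count.
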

We emphasize that  skew self reciprocal polynomials are different than  the  $\theta$-palindromic polynomials. We illustrate this fact with the following example.
\begin{example}
	Let $\alpha$ be a primitive element of $F_{16}$ and $f(x)=1+\alpha x+\alpha^2 x^2+\alpha^4 x^3+x^4\in F_{16}[x;\te]$ where $\te(a)=a^4$. Then, $f(x)=f^R(x)$ but since $\theta(\alpha^2)\neq\alpha^2$, $f(x)$ is not a $\theta$-palindromic polynomial.
\end{example}
\section{ Reversible DNA codes }

In this study we use the skew polynomial ring $F_{q}[x;\theta]$ with the automorphism $\theta$ on $F_{q}$ defined by $\theta(a)=a^{4^s}$ where $q=4^{2s}$. Since the order of $\theta$ is 2 we note that skew cyclic codes of odd length over $F_{q}$ with respect to $\theta$ are ordinary cyclic codes (\cite{I.siap},Theorem 8). For this reason we investigate the DNA reversible codes of even length and  odd length separately.
\subsection{Even length case}\label{32}

In this subsection we  deal with the divisors of $x^n-1$ where $n$ is even. Let $C$ be a skew cyclic code of length $n$ over $F_{q}$ with respect to the automorphism $\theta$ and $g(x)$ be the monic nonzero polynomial of minimal degree in $C$. Then, $C$ is generated by $g(x)$ and moreover $g(x)$ is a right divisor of $x^n-1$ in $F_{q}[x;\theta]$ (Lemma 1 in \cite{skew cyclic}). It is easily seen that the ideals $\langle g(x)\rangle$ and $\langle \beta g(x)\rangle$ where $\beta \in F^*_{q}$ are equal in $F_{q}[x;\theta]$ and if
$x^n-1=h(x)g(x)$, then $x^n-1=(h(x)\beta^{-1})(\beta g(x))$ in $F_{q}[x;\theta]$. Hence if $g(x)$ is a right divisor of $x^n-1$, then so is $\beta g(x)$. Therefore the generating polynomial need not  be monic. For the following theorems, we take $g(x)=g_0+g_1x+\ldots +g_mx^m$ as the generating polynomial of $C$,  which is a polynomial of minimal degree in $C$ without considering its monicness.
\begin{theorem}\label{teo}
Let $g(x)$ be a right divisor of $x^n-1$ in $F_{q}[x;\theta]$ where $deg(g(x))=m$ is even.
 Then, the skew cyclic code $C=\langle g(x)\rangle$ over $F_{q}$ with length $n$ is a reversible DNA code if and only if  $g(x)$ is a palindromic polynomial.
\end{theorem}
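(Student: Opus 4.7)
The plan is to translate DNA-reversibility directly into an identity on $g(x)$, exploiting the two parity facts that $\theta$ has order $2$ and that $n-m-1$ is odd. Write $g(x)=g_0+g_1x+\cdots+g_mx^m$; note first that $g_0\neq 0$, for otherwise the constant term of $h(x)g(x)$ would vanish, contradicting $h(x)g(x)=x^n-1$. A direct computation using $xa=\theta(a)x$ and $\theta^{n-m-1}=\theta$ gives
\[
x^{n-m-1}g(x)=\sum_{i=0}^{m}\theta(g_i)\,x^{n-m-1+i},
\]
while reindexing the DNA reverse of $g$ (supported on degrees $n-m-1,\ldots,n-1$) yields
\[
\mathrm{DR}(g)(x)=\sum_{i=0}^{m}\theta(g_{m-i})\,x^{n-m-1+i}.
\]
Hence $x^{n-m-1}g(x)=\mathrm{DR}(g)(x)$ if and only if $g_i=g_{m-i}$ for all $i$, i.e., $g$ is palindromic.

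For the sufficiency direction I would extend this to every element of the $F_q$-basis $\{x^kg(x):0\le k\le n-m-1\}$ of $C$. The same parity bookkeeping (using that $n-m$ is even) gives $\mathrm{DR}(x^kg(x))=x^{n-m-1-k}g(x)$ for all such $k$ whenever $g$ is palindromic. Since $\mathrm{DR}$ is additive and satisfies $\mathrm{DR}(ac)=\theta(a)\,\mathrm{DR}(c)$ for $a\in F_q$, with $\theta(a)\in F_q$, it carries each codeword into $C$, so $C$ is a reversible DNA code.

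For the converse, DNA-reversibility forces $\mathrm{DR}(g)\in C$, so $\mathrm{DR}(g)(x)=\sum_{k=0}^{n-m-1}a_k\,x^kg(x)$ uniquely with $a_k\in F_q$. Because $g_0\neq 0$, each $x^kg(x)$ has valuation exactly $k$, while $\mathrm{DR}(g)$ is supported on degrees $\ge n-m-1$; this collapses the sum to a single term $\mathrm{DR}(g)=a\,x^{n-m-1}g(x)$. Matching the coefficients at positions $i=0$ and $i=m$ in the two displayed formulas yields $a\,\theta(g_0)=\theta(g_m)$ and $a\,\theta(g_m)=\theta(g_0)$, hence $a^2=1$, so $a=1$ in characteristic $2$, and consequently $g_i=g_{m-i}$ for all $i$. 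The main obstacle, and the reason the hypothesis that $m$ is even is essential, is the parity of $n-m-1$: only when this is odd does $\theta^{n-m-1}$ collapse to $\theta$ and allow the clean comparison between $x^{n-m-1}g(x)$ and $\mathrm{DR}(g)$. Everything else is routine coefficient bookkeeping together with the characteristic-$2$ squaring argument that pins down the scalar $a$.
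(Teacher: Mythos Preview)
Your proof is correct and follows essentially the same route as the paper's. The sufficiency direction is identical: both arguments verify the identity
\[
\left(\phi\!\left(\sum_{i}\beta_i x^i g(x)\right)\right)^r=\phi\!\left(\sum_{i}\theta(\beta_i) x^{n-m-1-i} g(x)\right),
\]
using that $n-m-1$ is odd so $\theta^{n-m-1}=\theta$. For the converse there is a small organizational difference: the paper works with the monic generator $a(x)=g_m^{-1}g(x)$, takes the DNA reverse of $x^{n-m-1}a(x)$ (which lands in degrees $0,\ldots,m$), subtracts a scalar multiple of $a(x)$ to obtain a codeword of degree $<m$, and invokes minimality of $\deg g$ to force it to vanish; you instead take $\mathrm{DR}(g)$ (supported in degrees $n-m-1,\ldots,n-1$) and use a valuation argument on the basis $\{x^k g(x)\}$ to collapse the representation to a single term $a\,x^{n-m-1}g(x)$. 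Either way one arrives at the same endpoint $a^2=1$ (equivalently $a_0^2=1$) and hence $a=1$ in characteristic~$2$, yielding $g_i=g_{m-i}$.
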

\begin{proof}
	 Let $g(x)$ be a palindromic polynomial. Recall that $\phi$ gives the correspondence of codewords in DNA form. Reverses of each DNA codeword $\phi(c)$, for $c\in C$, are obtained by the following equation:
	 \begin{equation}
	 \left( \phi \left(  \sum_{i=0} ^{k-1}\beta_i x^i g(x)\right)\right)^r =  \phi  \left(  \sum_{i=0} ^{k-1} \theta(\beta_i) x^{k-1-i}g(x) \right)
	 \end{equation}
	 where $k=n-deg(g(x))$ and $\beta_i \in F_{q}$. Since $ \sum_i \theta(\beta_i) x^{k-1-i}g(x)\in C$, then $C$ is a  reversible DNA code.\\
	 Conversely, let $C=\langle g(x)\rangle$ be a reversible DNA code and $a(x)=g_m^{-1}g(x)=a_0+a_1x+\ldots +a_{m-1}x^{m-1}+x^m$ where $a_i=g_m^{-1}g_i$. Then, $a(x)\in C$ is the nonzero monic polynomial of minimal degree in C. Since
	 $c_1(x)=x^{n-m-1}a(x)=a_0^{4^s}x^{n-m-1}+a_1^{4^s}x^{n-m}+\ldots+a_{m-1}^{4^s}x^{n-2}+x^{n-1}\in C$, then its DNA reverse
	 $c_2(x)=1+a_{m-1}x+\ldots+a_1x^{m-1}+a_0x^m$  is in $C$. So
	 $c_3(x)=a(x)-a_0^{-1}c_2(x)=(a_0-a_0^{-1})+(a_1-a_0^{-1}a_{m-1})x+\ldots+(a_{m-1}-a_0^{-1}a_1)x^{m-1}$ is in $C$ with degree less than $deg(g(x))=m$, which contradicts with the minimality of $deg(g(x))$ if  $c_3(x)$ is nonzero. Hence
	
	 $$ c_3(x)=0\Rightarrow a_0-a_0^{-1}=0\Rightarrow a_0=1,$$
	 also $a_1-a_0^{-1}a_{m-1}=0\Rightarrow a_1-a_{m-1}=0\Rightarrow a_1=a_{m-1} $. Continuing in this manner we obtain that $a_i=a_{m-i}$ for all $i\in \{0,1,\ldots,m\}$. So $a(x)$ is a palindromic polynomial. Then,  $g(x)=g_m a(x)$ is also a palindromic polynomial.
\end{proof}

\begin{theorem}\label{teo1}
Let $g(x)$ be a right divisor of $x^n-1$ in $F_{q}[x;\theta]$ where $deg(g(x))=m$ is odd and $C=\langle g(x)\rangle$ be a skew cyclic code of length $n$ over $F_{q}$.
If $g(x)$ is a $\theta$-palindromic polynomial, then $C$ is a reversible DNA code.
Conversely if $C$ is a reversible DNA code, then $C$ is generated by a $\theta$-palindromic polynomial.
\end{theorem}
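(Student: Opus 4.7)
The plan is to mirror Theorem \ref{teo}, with the adjustments forced by the change of parity ($m$ is odd here, so $k=n-m$ is odd). For the forward direction I verify the reversal identity
\begin{equation*}
\Bigl(\phi\Bigl(\sum_{i=0}^{k-1}\beta_i x^i g(x)\Bigr)\Bigr)^r = \phi\Bigl(\sum_{i=0}^{k-1}\theta(\beta_i)\,x^{k-1-i}g(x)\Bigr),
\end{equation*}
which now closes up for a slightly different reason than before: after expanding both sides and matching the coefficient of $x^r$, the $\theta$-palindromic relation $g_j=\theta(g_{m-j})$ introduces one extra $\theta$ into the right-hand expression. The remaining discrepancy in the $\theta$-exponents is $2i+3-k$, which is even precisely because $k$ is odd; since $\theta$ has order $2$ the two sides coincide, so the DNA reverse of every codeword lies in $C$.

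For the converse I follow the template of Theorem \ref{teo}. Write $a(x)=g_m^{-1}g(x)=a_0+a_1x+\cdots+a_{m-1}x^{m-1}+x^m$ for the monic minimal-degree element of $C$; note that $a_0\neq 0$ because a constant-term-zero $a(x)$ cannot right-divide $x^n-1$. Since $n-m-1$ is now \emph{even}, $\theta^{n-m-1}=\mathrm{id}$, and
\begin{equation*}
c_1(x)=x^{n-m-1}a(x)=a_0 x^{n-m-1}+a_1 x^{n-m}+\cdots+a_{m-1}x^{n-2}+x^{n-1}\in C
\end{equation*}
carries its coefficients $a_i$ without any $\theta$-twist. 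Reversibility of $C$ then places the DNA reverse $c_2(x)=1+\theta(a_{m-1})x+\cdots+\theta(a_1)x^{m-1}+\theta(a_0)x^m$ in $C$, and $c_3(x)=a(x)-\theta(a_0)^{-1}c_2(x)$ has degree strictly less than $m$. Minimality forces $c_3(x)=0$, and comparing coefficients yields $a_0\theta(a_0)=1$ together with $a_i=a_0\,\theta(a_{m-i})$ for $1\leq i\leq m-1$.

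The hard part, and the genuine departure from Theorem \ref{teo}, is that these relations do not assert that $a(x)$ itself is $\theta$-palindromic, only that it is $\theta$-palindromic \emph{up to a twist by $a_0$}. To repair this I look for a scalar $\lambda\in F_q^*$ making $\tilde a(x)=\lambda a(x)$ genuinely $\theta$-palindromic; a short calculation reduces all the required equalities $\lambda a_i=\theta(\lambda a_{m-i})$ to the single condition $\theta(\lambda)=a_0\lambda$, equivalently $\lambda/\theta(\lambda)=a_0^{-1}$. Because the norm $N(a_0^{-1})=a_0^{-1}\theta(a_0^{-1})=(a_0\theta(a_0))^{-1}=1$ in the quadratic extension $F_{4^{2s}}/F_{4^s}$ (whose Galois group is $\langle\theta\rangle$), Hilbert's Theorem 90 supplies such a $\lambda$. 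Then $\tilde a(x)$ is a $\theta$-palindromic generator of $C=\langle\tilde a(x)\rangle$, finishing the proof.
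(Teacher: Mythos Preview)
Your proof is correct and follows essentially the same approach as the paper's: both directions proceed exactly as in the paper, the only cosmetic difference being that where the paper writes $a_0=\alpha^{(4^s-1)j}$ explicitly via a primitive element and takes the scalar $\alpha^j$, you obtain the scalar $\lambda$ by invoking Hilbert's Theorem~90 for the quadratic extension $F_{4^{2s}}/F_{4^s}$. The two arguments are equivalent (the paper's explicit construction \emph{is} the proof of Hilbert~90 for cyclic groups over finite fields), and you additionally supply the parity justification for the forward-direction identity that the paper leaves implicit.
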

\begin{proof}
	  Let $g(x)$ be a $\theta$-palindromic polynomial. Recall that $\phi$ gives the correspondence of codewords in DNA form. Reverses of each DNA codeword $\phi(c)$, for $c\in C$, are obtained by the following equation:
	  \begin{equation}
	  \left( \phi \left(  \sum_{i=0}^{k-1} \beta_i x^i g(x)\right)\right)^r =  \phi  \left(  \sum_{i=0}^{k-1} \theta(\beta_i) x^{k-1-i}g(x) \right)
	  \end{equation}
	  where $\beta_i \in F_{q}$ and $k=n-deg(g(x))$. Since $ \sum_{i=0}^{k-1} \theta(\beta_i) x^{k-1-i}g(x)\in C$, then $C$ is a  reversible DNA code.
	
	  Let $C=\langle g(x)\rangle$ be a reversible DNA code and $a(x)=g_m^{-1}g(x)=a_0+a_1x+\ldots +a_{m-1}x^{m-1}+x^m$ where $a_i=g_m^{-1}g_i$. Then, $a(x)\in C$ is the nonzero monic polynomial of minimal degree in C. Since
	  $c_1(x)=x^{n-m-1}a(x)=a_0x^{n-m-1}+a_1x^{n-m}+\ldots+a_{m-1}x^{n-2}+x^{n-1}\in C$,
	  then its DNA reverse
	  $c_2(x)=1+a_{m-1}^{4^s}x+\ldots+a_1^{4^s}x^{m-1}+a_0^{4^s}x^{m}\in C$. So, $c_3(x)=a(x)-a_0^{-4^s}c_2(x)=(a_0-a_0^{-4^s})+(a_1-a_0^{-4^s}a_{m-1}^{4^s})x+\ldots+(a_{m-1}-a_0^{-4^s}a_1^{4^s})x^{m-1}$
	  is in $C$ with degree less than $deg(g(x))=m$, this contradicts with the minimality of $deg(g(x))$ if  $c_3(x)$ is nonzero. Hence
	  $$ c_3(x)=0\Rightarrow a_0-a_0^{-4^s}=0\Rightarrow a_0^{4^s+1}=1\Rightarrow a_0=\alpha^{(4^s-1)j} $$
	$ \text{where } \alpha \text{ is a primitive element of  } F_q$, and $j$ is a positive integer. Also $a_{m-1}-a_0^{-4^s}a_1^{4^s}=0\Rightarrow a_{m-1}=\alpha^{(4^s-1)j}a_1^{4^s}.$ Continuing in this
	  manner we obtain that $$a(x)=\sum\limits_{i=0}^{\frac{m-1}{2}} (a_ix^i+\alpha^{(4^s-1)j}a_i^{4^s}x^{m-i})\text{ where } a_0=\alpha^{(4^s-1)j}.$$
	  Multiply $a(x)$ with $\alpha^j$ then,
	  $$\alpha^ja(x)=\sum\limits_{i=0}^{\frac{m-1}{2}} \alpha^ja_ix^i+\alpha^{(4^s)j}a_i^{4^s}x^{m-i}\in C.$$
	  Hence  $\alpha^ja(x)=\alpha^jg_m^{-1}g(x)$ is a $\theta$-palindromic polynomial in $C$. Since the ideal generated by $\alpha^ja(x)$ is equal to the ideal $\langle g(x)\rangle=C$, we conclude that $C$ can be generated by a $\theta$-palindromic polynomial.	
\end{proof}

Since in this subsection we consider the even case for $n$,  $x^n-1$ is in the center of the ring $F_{q}[x;\theta]$. As a consequence of Lemma 7 in \cite{coding with} we have the following lemma.
\begin{lemma} \label{lm7}
Let $x^n-1=h(x)g(x)$ in $F_{q}[x;\theta]$. Then, $x^n-1=g(x)h(x)$, i.e. any right divisor of $x^n-1$ is also a left divisor in $F_{q}[x;\theta]$.
\end{lemma}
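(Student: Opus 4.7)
The plan rests on two ingredients: (i) the paper has just noted that for even $n$ the element $x^n-1$ lies in the center of $F_q[x;\theta]$, because $\theta$ has order $2$ and therefore $x^n a = \theta^n(a)x^n = a x^n$ for every scalar $a$; and (ii) $F_q[x;\theta]$ is a noncommutative domain, since an Ore extension of a field by an automorphism has no zero divisors (the degree of a product is the sum of degrees, and the leading coefficient of $p(x)q(x)$ is $a\,\theta^{\deg p}(b)$, which is nonzero whenever the leading coefficients $a$ and $b$ are).

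Starting from the hypothesis $h(x)g(x)=x^n-1$, I would multiply on the right by $g(x)$ to get $h(x)g(x)^2=(x^n-1)g(x)$. Because $x^n-1$ is central, the right-hand side equals $g(x)(x^n-1)=g(x)h(x)g(x)$. Hence
\[
h(x)g(x)^2 \;=\; g(x)h(x)g(x).
\]
Since $g(x)\neq 0$ (being a divisor of the nonzero polynomial $x^n-1$) and the ring is a domain, right-cancellation of $g(x)$ yields $h(x)g(x)=g(x)h(x)$, and the left-hand side equals $x^n-1$ by hypothesis. Therefore $g(x)h(x)=x^n-1$, which is exactly the statement that $g(x)$ is also a left divisor of $x^n-1$.

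There is essentially no serious obstacle: the two ingredients above are independently standard, and the content of the lemma reduces to the general fact that in a domain, right divisors of a central element are automatically left divisors. The only point requiring a small check is right-cancellation in $F_q[x;\theta]$, which is immediate from the degree-of-product formula noted in (ii). If one prefers not to invoke domain-ness explicitly, an alternative is to perform left Euclidean division $x^n-1=g(x)q(x)+r(x)$ with $\deg r<\deg g$ and, using centrality together with the hypothesis $h(x)g(x)=x^n-1$, force $r(x)=0$; this is essentially the route taken in Lemma~7 of \cite{coding with} that the statement above cites.
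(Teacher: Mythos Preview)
Your argument is correct. The paper itself does not give a proof of this lemma at all: it is stated as a direct consequence of Lemma~7 in \cite{coding with}, so there is nothing in the paper to compare your proof against term-by-term.

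What you do supply is a clean self-contained argument exploiting exactly the two features the paper has set up: centrality of $x^n-1$ (since $n$ is even and $\theta$ has order $2$) and the fact that the Ore extension $F_q[x;\theta]$ is a domain, so right-cancellation is available. Your alternative remark about left Euclidean division is, as you say, closer to what the cited reference actually does; your cancellation route is a slightly slicker repackaging of the same idea and avoids having to carry out the division explicitly. Either way the content is the general principle that in a noncommutative domain every one-sided divisor of a central element is automatically a two-sided divisor, which is precisely what is needed here.
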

\begin{theorem}\label{lem1}
Let $x^n-1=h(x)g(x)$ in $F_{q}[x;\theta]$ where the degree of $g(x)$ is even. If $h(x)$ is a palindromic polynomial, then $g(x)$ is also a palindromic polynomial.
\end{theorem}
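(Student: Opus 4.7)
The plan is to translate the palindromic symmetry of $h$ into a symmetry of $g$ by comparing coefficients of the equation $h(x)g(x)=x^n-1$ at the symmetric positions $x^k$ and $x^{n-k}$. Write $g(x)=\sum_{j=0}^{m} g_j x^j$ and $h(x)=\sum_{i=0}^{n-m} h_i x^i$ with $h_i=h_{n-m-i}$. Since $n$ is even (the standing assumption of this subsection) and $m$ is even by hypothesis, $n-m$ is even as well, so $\theta^{n-m}=\mathrm{id}$, and more generally $\theta^{n-m-i}=\theta^i$ because $\theta^2=\mathrm{id}$. For each $0<k<n$ the coefficient of $x^k$ in $h(x)g(x)$ yields $\sum_{i+j=k} h_i\,\theta^i(g_j)=0$. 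The first key step is to rewrite the corresponding equation at index $n-k$ via the substitution $i\mapsto n-m-i$, $j\mapsto m-j$, and apply the palindromic identity for $h$ together with $\theta^{n-m-i}=\theta^i$ to obtain $\sum_{i+j=k} h_i\,\theta^i(g_{m-j})=0$.

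Define $d_j:=g_j-g_{m-j}$ and $d(x):=\sum_{j=0}^{m} d_j x^j$. Subtracting the two equations above yields $\sum_{i+j=k} h_i\,\theta^i(d_j)=0$ for every $0<k<n$, which is exactly the statement that the coefficients of $h(x)d(x)$ at $x,x^2,\ldots,x^{n-1}$ all vanish. To handle the two boundary coefficients I would compare the constant and leading terms of $h(x)g(x)=x^n-1$: working in characteristic $2$, these read $h_0 g_0=1$ and $h_{n-m}\theta^{n-m}(g_m)=1$, which together with the palindromic identity $h_{n-m}=h_0$ and $\theta^{n-m}=\mathrm{id}$ force $g_0=g_m$. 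Hence $d_0=d_m=0$, so the $x^0$ and $x^n$ coefficients of $h(x)d(x)$ also vanish, and therefore $h(x)d(x)=0$ in $F_q[x;\theta]$.

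Since $F_q[x;\theta]$ is a (noncommutative) integral domain and $h(x)\neq 0$, this forces $d(x)=0$, i.e.\ $g_j=g_{m-j}$ for all $j$, proving that $g$ is palindromic. The main obstacle I anticipate is the careful bookkeeping in the index substitution $i\mapsto n-m-i$: one must simultaneously exploit that $h$ is palindromic in the ordinary (non-skew) sense and that the extra $\theta^i$ twist coming from the skew product is tamed precisely because both $n$ and $m$ are even, so that $\theta^{n-m-i}=\theta^i$. Without the evenness hypothesis on $\deg g$ this alignment would fail, which is exactly in parallel with the dichotomy between Theorems~\ref{teo} and \ref{teo1}.
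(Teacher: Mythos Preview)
Your argument is correct. Both your proof and the paper's hinge on comparing the coefficients of $h(x)g(x)=x^n-1$ at the symmetric positions $x^k$ and $x^{n-k}$, using $h_i=h_{n-m-i}$ together with $\theta^{n-m-i}=\theta^{i}$ (valid because $n-m$ is even and $\theta^2=\mathrm{id}$). The difference lies in how the conclusion is extracted. The paper runs an explicit induction on $l$: having established $g_i=g_{m-i}$ for $i<l$, the comparison of the $x^l$ and $x^{n-l}$ coefficients collapses to $h_0g_l=h_0g_{m-l}$, whence $g_l=g_{m-l}$. You instead package all of these comparisons at once into the identity $h(x)d(x)=0$ for $d(x)=\sum_j(g_j-g_{m-j})x^j$, and then invoke the fact that $F_q[x;\theta]$ has no zero divisors to conclude $d(x)=0$. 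Your route is slightly slicker and makes the structural reason transparent (cancellation in a domain), at the cost of citing that extra algebraic fact; the paper's induction is more hands-on but entirely self-contained. In effect your domain argument is the ``closed form'' of the paper's triangular back-substitution, since the inductive step is precisely reading off $d_l=0$ from the $x^l$-coefficient of $h(x)d(x)=0$ once $d_0,\ldots,d_{l-1}$ are known to vanish.
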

\begin{proof}
	Let $h(x)=h_0+h_1x+\ldots+h_{2k}x^{2k}$ and $g(x)=g_0+g_1x+\ldots+g_{2r}x^{2r}$  where $n=2r+2k$. Suppose that $h(x)$ is a palindromic polynomial then
	$h_i=h_{2k-i}$ for all $i=0,1,\ldots,k$.
	Let $a_i$ be the coefficient of $x^i$ in $h(x)g(x)$.
	For any $t<n/2$, the coefficient of $x^t$ in $h(x)g(x)$ is $a_t=\sum_{j=0}^{t}h_j\theta^j(g_{t-j})$ and the coefficient of $x^{n-t}$ is $a_{n-t}=\sum_{j=0}^{t}h_{2k-j}\theta^{2k-j}(g_{2r-(t-j)})$.
	
	$h(x)g(x)=x^n-1$ implies that $a_0=a_n=1$ and $a_i=0$ for all $i\in {1,\ldots,n-1}$.
	We will show that $g_i=g_{2r-i}$ for all $i=0,1,\ldots,r$ by induction.
	
	For $i=0$;
	$a_0=h_0\theta^0(g_0)=h_0g_0$, on the other hand $a_n=h_{2k}\theta^{2k}(g_{2r})=h_{2k}g_{2r}$.
	Since $a_0=a_n=1$ and $h_0=h_{2k}$, then we have $g_0=g_{2r}$.
	
	Suppose the induction hypothesis $g_i=g_{2r-i}$ is true for all $0<i<l$ (where $l<r$).
	Now let us look at the coefficients $a_l$ and $a_{n-l}$;

	\begin{equation}
	a_l=\sum_{j=0}^{l}h_j\theta^j(g_{l-j})=\sum_{j=1}^{l}h_j\theta^j(g_{l-j})+h_0g_l,
	\end{equation}
	
	\begin{equation}
	a_{n-l}=\sum_{j=0}^{l}h_{2k-j}\theta^{2k-j}(g_{2r-(l-j)})=\sum_{j=1}^{l}h_{2k-j}\theta^{2k-j}(g_{2r-(l-j)})+h_{2k}g_{2r-l}.
	\end{equation}
	
	Since the order of $\te$ is $2$, then $\theta^{j}(a)=\theta^{2k-j}(a)$ for all $a\in F_{q}$ and  $j\in \{1,\ldots,l\}$.
		Since $h_j=h_{2k-j}$ and $g_{l-j}=g_{2r-(l-j)}$, then we have $h_j\theta^j(g_{l-j})=h_{2k-j}\theta^{2k-j}(g_{2r-(l-j)})$.

	Therefore $\sum_{j=1}^{l}h_j\theta^j(g_{l-j})=\sum_{j=1}^{l}h_{2k-j}\theta^{2k-j}(g_{2r-(l-j)})$.
	Since $a_l=a_{n-l}=0$ we can conclude that $h_0g_l=h_{2k}g_{2r-l}=h_0g_{2r-l}$. Thus $g_l=g_{2r-l}$.	
	Therefore $g(x)$ is a palindromic polynomial.
\end{proof}

As a consequence of Lemma \ref{lm7} and Theorem \ref{lem1} we have the following corollary:
\begin{coro}\label{coro1}
Let $x^n-1=h(x)g(x)$ in $F_{q}[x;\theta]$ where the degree of $g(x)$ is even. If $g(x)$ is a palindromic polynomial, then $h(x)$ is a palindromic polynomial.
\end{coro}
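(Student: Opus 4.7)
The plan is to reduce this statement to Theorem \ref{lem1} by swapping the roles of $g(x)$ and $h(x)$ in the factorization, after first converting the given right factorization into a left factorization via Lemma \ref{lm7}.

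Concretely, I would proceed as follows. Starting from $x^n-1=h(x)g(x)$, Lemma \ref{lm7} immediately gives $x^n-1=g(x)h(x)$ as well, so the factorization $x^n-1=\tilde{h}(x)\tilde{g}(x)$ with $\tilde{h}(x)=g(x)$ and $\tilde{g}(x)=h(x)$ is a valid factorization in $F_q[x;\theta]$. To invoke Theorem \ref{lem1} on this new factorization, I must check that $\deg(\tilde{g})$ is even. This is where the even length hypothesis of the subsection pays off: since we are in the even length case, $n$ is even, and by assumption $\deg(g)$ is even, so $\deg(h)=n-\deg(g)$ is also even, i.e., $\deg(\tilde{g})$ is even.

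With these two conditions in hand, namely $x^n-1=\tilde{h}(x)\tilde{g}(x)$ with $\tilde{h}(x)=g(x)$ palindromic (by hypothesis of the corollary) and $\deg(\tilde{g}(x))$ even, Theorem \ref{lem1} applies directly and yields that $\tilde{g}(x)=h(x)$ is palindromic, which is exactly the conclusion.

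I do not foresee a real obstacle here; the argument is a one-line bookkeeping corollary once one sees that Lemma \ref{lm7} lets us reinterpret a right divisor as a left divisor and that the parity conditions are symmetric between $g$ and $h$ under the ambient even length hypothesis. The only thing worth stating explicitly in the written proof is the verification that $\deg(h)$ is even, since Theorem \ref{lem1} is not symmetric in its two factors as stated (it requires the \emph{second} factor to have even degree).
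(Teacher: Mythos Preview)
Your argument is correct and is exactly the approach the paper intends: the corollary is stated as an immediate consequence of Lemma~\ref{lm7} and Theorem~\ref{lem1}, and you have filled in precisely the right details, including the parity check $\deg(h)=n-\deg(g)$ being even under the ambient even-length assumption.
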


\begin{theorem}\label{teo4}
Let $x^n-1=h(x)g(x)$ in $F_{q}[x;\theta]$ where the degree of $g(x)$ is odd. If $g(x)$ is a $\theta$-palindromic polynomial, then  $h(x)$ is a palindromic polynomial.
\end{theorem}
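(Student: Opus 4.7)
The plan is to follow the inductive coefficient-comparison strategy used in the proof of Theorem \ref{lem1}, but carefully tracking how the $\theta$-palindromic condition on $g$ interacts with the skew multiplication. Writing $g(x)=\sum_{i=0}^{m} g_i x^i$ with $m$ odd and $g_i=\theta(g_{m-i})$, and $h(x)=\sum_{j=0}^{n-m} h_j x^j$, I note that $n-m$ is odd since $n$ is even and $m$ is odd. The goal is to establish $h_j = h_{n-m-j}$ for all $j$ by induction on $j$, comparing the coefficients $a_t$ and $a_{n-t}$ of $h(x)g(x)=x^n-1$.

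The key preparatory step is to rewrite $a_{n-t}$ in a form parallel to $a_t=\sum_{i+j=t} h_i\theta^i(g_j)$. Starting from $a_{n-t}=\sum_{i+j=n-t} h_i\theta^i(g_j)$ and reindexing via $i=(n-m)-i'$, $j=m-j'$, I obtain $a_{n-t}=\sum_{i'+j'=t} h_{n-m-i'}\theta^{n-m-i'}(g_{m-j'})$. Applying the $\theta$-palindromic identity $g_{m-j'}=\theta(g_{j'})$ turns this into $\sum_{i'+j'=t} h_{n-m-i'}\theta^{n-m-i'+1}(g_{j'})$. The crucial parity fact is that $n-m+1$ is even (as $n$ is even and $m$ is odd), so $\theta^{n-m-i'+1}=\theta^{i'}$ on $F_q$ since $\theta$ has order $2$. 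Hence $a_{n-t}=\sum_{i'+j'=t} h_{n-m-i'}\theta^{i'}(g_{j'})$, which matches the $\theta$-pattern of $a_t$ with each $h_i$ replaced by $h_{n-m-i}$.

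With this identity the induction runs smoothly. For the base case $t=0$: $a_0=h_0 g_0=1$ and $a_n=h_{n-m}g_0$, and since $g_0\neq 0$ (the constant term of a divisor of $x^n-1$ cannot vanish), comparing the two gives $h_0=h_{n-m}$. For the inductive step at $t=l$ with $0<l\le (n-m-1)/2$, assuming $h_i=h_{n-m-i}$ for all $i<l$, all terms with index $i'<l$ in the two sums match, and the difference $a_l-a_{n-l}$ collapses to $(h_l-h_{n-m-l})\theta^l(g_0)$. Since $a_l=a_{n-l}=0$ and $g_0\neq 0$, this forces $h_l=h_{n-m-l}$, completing the induction and showing $h$ is palindromic.

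The main obstacle I expect is not the induction itself, which becomes routine once set up, but the parity bookkeeping in the reindexing. The phenomenon that a $\theta$-palindromic $g$ forces an ordinary palindromic (rather than $\theta$-palindromic) $h$ is precisely the parity coincidence $n-m+1\equiv 0\pmod 2$; without it, the $\theta$-exponents in the sums would fail to align and the argument would instead produce a $\theta$-twisted symmetry on $h$.
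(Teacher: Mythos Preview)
Your proof is correct and follows essentially the same inductive coefficient-comparison argument as the paper: both compare $a_l$ with $a_{n-l}$, use the $\theta$-palindromic relation $g_{m-j}=\theta(g_j)$ together with the parity fact that $\theta$ has order $2$ and $n-m$ is odd, and isolate the single unmatched term $(h_l-h_{n-m-l})\theta^l(g_0)$. Your upfront reindexing of $a_{n-t}$ is a slightly cleaner packaging than the paper's term-by-term verification, but the substance is identical.
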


\begin{proof}
	Let $h(x)=h_0+h_1x+\ldots+h_{2k-1}x^{2k-1}$ and $g(x)=g_0+g_1x+\ldots+g_{2r-1}x^{2r-1}$  where $n=2r+2k-2$. Suppose that $g(x)$ is a $\theta$-palindromic polynomial then
	$g_i=\te(g_{2r-1-i})$ for all $i=0,1,\ldots,r-1$.
	Let $a_i$ be the coefficient of $x^i$ in $h(x)g(x)$.
	For any $t<n/2$, the coefficient of $x^t$ in $h(x)g(x)$ is $a_t=\sum_{j=0}^{t}h_j\theta^j(g_{t-j})$ and the coefficient of $x^{n-t}$ is $a_{n-t}=\sum_{j=0}^{t}h_{2k-1-j}\theta^{2k-1-j}(g_{2r-1-(t-j)})$.
	
	$h(x)g(x)=x^n-1$ implies that $a_0=a_n=1$ and $a_i=0$ for all $i\in {1,\ldots,n-1}$.
	We will show that $h_i=h_{2k-1-i}$ for all $i=0,1,\ldots,k-1$ by induction.
	
	For $i=0$;
	$a_0=h_0\theta^0(g_0)=h_0g_0$, on the other hand $a_n=h_{2k-1}\theta^{2k-1}(g_{2r-1})=h_{2k-1}\theta(g_{2r-1})$.
	Since $a_0=a_n=1$ and $g_0=\theta(g_{2r-1})$, then we have $h_0=h_{2k-1}$.
	
Suppose the induction hypothesis $h_i=h_{2k-1-i}$ is true for all $0<i<l$ (where $l\leq k-1$).
	Now let us look at the coefficients $a_l$ and $a_{n-l}$;
	\begin{align*}	
		a_l&=\sum_{j=0}^{l}h_j\theta^j(g_{l-j})=\sum_{j=0}^{l-1}h_j\theta^j(g_{l-j})+h_l\theta^l(g_0),\\
	a_{n-l}&=\sum_{j=0}^{l}h_{2k-1-j}\theta^{2k-1-j}(g_{2r-1-(l-j)})\\
	&=\sum_{j=0}^{l-1}h_{2k-1-j}\theta^{2k-1-j}(g_{2r-1-(l-j)})+h_{2k-1-l}\theta^{2k-1-l}(g_{2r-1}).
	\end{align*}

Since the order of $\te$ is $2$, then $\theta^{j}(a)=\theta^{2k-1-j}(\theta(a))$ for all $a\in F_{q}$ and  $j\in \{0,\ldots,l-1\}$.	
		Since $g_{l-j}=\te(g_{2r-(l-j)})$  and $h_j=h_{2k-1-j}$, then we have
		 $\theta^{j}(g_{l-j})=\theta^{2k-1-j}(\theta(g_{l-j}))=\theta^{2k-1-j}(g_{2r-1-(l-j)})$ and hence
		$h_j\theta^j(g_{l-j})=h_{2k-1-j}\theta^{2k-1-j}(g_{2r-1-(l-j)})$.\\
	Therefore $\sum_{j=0}^{l-1}h_j\theta^j(g_{l-j})=\sum_{j=0}^{l-1}h_{2k-1-j}\theta^{2k-1-j}(g_{2r-1-(l-j)})$.
	Since $a_l=a_{n-l}=0$, we can conclude that $h_lg_0=h_{2k-1-l}\theta(g_{2r-1})$. Thus $h_l=h_{2k-1-l}$.	
	Therefore $h(x)$ is a palindromic  polynomial.
\end{proof}


\begin{theorem}\label{teo5}

Let $h(x)$ be a palindromic polynomial in $F_{q}[x;\theta]$ of degree $t$.
\begin{enumerate}
  \item If $t$ is an  odd integer,  then $h^R(x)$ is a $\theta$-palindromic polynomial.
  \item If $t$ is an  even integer, then $h^R(x)$ is also a palindromic polynomial.
\end{enumerate}
\end{theorem}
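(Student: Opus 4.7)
The plan is to write out the coefficients of $h^R(x)$ directly from the definition and compare $c_i$ with $c_{t-i}$, using the palindromic hypothesis on $h(x)$ together with the fact that $\theta$ has order $2$. Specifically, put $h^R(x)=\sum_{i=0}^{t}c_i x^i$ with $c_i=\theta^i(h_{t-i})$. Then
$$c_{t-i}=\theta^{t-i}(h_{t-(t-i)})=\theta^{t-i}(h_i),$$
and since $h(x)$ is palindromic we may replace $h_i$ by $h_{t-i}$ to obtain $c_{t-i}=\theta^{t-i}(h_{t-i})$.

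Now I would split on the parity of $t$. When $t$ is even, the exponents $i$ and $t-i$ have the same parity, so $\theta^{t-i}$ and $\theta^{i}$ act identically on $F_q$ (because $\theta^2=\mathrm{id}$). Hence
$$c_{t-i}=\theta^{t-i}(h_{t-i})=\theta^i(h_{t-i})=c_i,$$
which is exactly the palindromic condition on $h^R(x)$, proving part (2). When $t$ is odd, $i$ and $t-i$ have opposite parities, so $\theta^{t-i}=\theta\circ\theta^i$ on $F_q$. Therefore
$$c_{t-i}=\theta^{t-i}(h_{t-i})=\theta(\theta^i(h_{t-i}))=\theta(c_i),$$
and applying $\theta$ to both sides (and using $\theta^2=\mathrm{id}$) yields $c_i=\theta(c_{t-i})$, which is the $\theta$-palindromic condition, establishing part (1).

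There is no real obstacle here: the only thing to keep track of is the parity bookkeeping for the exponent of $\theta$, and the key observation is that the $\theta$-twist in the definition of the skew reciprocal polynomial $h^R(x)$ shifts the parity of the total exponent of $\theta$ precisely when $t$ is odd, which is exactly what converts palindromic into $\theta$-palindromic. Both parts of the statement follow from the same short calculation above.
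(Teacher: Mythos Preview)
Your proof is correct and follows essentially the same approach as the paper: write out the coefficients $c_i=\theta^i(h_{t-i})$ of $h^R(x)$, use the palindromic hypothesis $h_i=h_{t-i}$, and reduce the exponent of $\theta$ modulo $2$. The only difference is organizational: the paper further splits each case on the parity of $i$, whereas you handle all $i$ at once by noting that $i$ and $t-i$ have the same parity when $t$ is even and opposite parity when $t$ is odd, which slightly streamlines the argument.
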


\begin{proof}
	 Let $h(x)=h_0+h_1x+\ldots+h_{t-1}x^{t-1}\in F_{q}[x;\theta]$ be a palindromic polynomial. Then, $h_i=h_{t-i}$ for all $i=0,1,\ldots,t-1$ and $h^R(x)=\sum_{i=0}^{t}a_i x^i=\sum_{i=0}^{t}\theta^i(h_{t-i})x^{i}$. 
	 \begin{enumerate}
	 	
	 	\item Suppose that $t$ is   odd.
	 	If $i$ is an odd number, then $t-i$ is even.
	 	$$a_i=\theta^i(h_{t-i})=\theta(h_{t-i})=\theta(h_{i})\mbox{  and  } a_{t-i}=\theta^{t-i}(h_{i})=h_i.$$
	 	Hence $a_i=\theta(a_{t-i})$. Similarly $a_i=\theta(a_{t-i})$ where $i$ is even.
	 	Thus we can conclude that $h^R(x)$ is a $\theta$-palindromic polynomial.
	 	\item Suppose that $t$ is even.
	 	If $i$ is an odd number, then $t-i$ is also an odd number.
	 	$$a_i=\theta^i(h_{t-i})=\theta(h_{t-i})=\theta(h_{i})\mbox{  and  } a_{t-i}=\theta^{t-i}(h_{i})=\theta(h_i).$$
	 	Thus $a_i=a_{t-i}$. Similarly $a_i=a_{t-i}$ where $i$ is even.
	 	Hence we obtain that $h^R(x)$ is  a palindromic polynomial.
	 \end{enumerate}	
\end{proof}

\begin{coro}\label{lem4}

Let $C$ be a skew cyclic code  of length $n$ over $F_{q}$ with respect to the automorphism $\te$. If $C$ is a reversible DNA code, then so is $C^\perp$.

\end{coro}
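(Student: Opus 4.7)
The plan is to chain together the structural results just established, using the dual formula $C^\perp=\langle h^R(x)\rangle$ from the cited Lemma (which applies because the order of $\theta$ is $2$ and $n$, being even in this subsection, is a multiple of it). Write $x^n-1=h(x)g(x)$ with $g(x)$ the minimal-degree generator of $C$. The argument splits into two cases according to the parity of $m=\deg g(x)$, and in each case it proceeds via the same three-step template: first transfer the (possibly $\theta$-)palindromic structure from $g$ to $h$, then pass from $h$ to its skew reciprocal $h^R$ via Theorem \ref{teo5}, and finally apply Theorem \ref{teo} or Theorem \ref{teo1} to $h^R$ to recognize $C^\perp$ as a reversible DNA code.

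\textbf{Even case.} If $m$ is even, Theorem \ref{teo} tells us that $g(x)$ must be palindromic. Corollary \ref{coro1} then forces $h(x)$ to be palindromic as well, and $\deg h(x)=n-m$ is even. Theorem \ref{teo5}(2) yields that $h^R(x)$ is palindromic of even degree, so by the ``if'' direction of Theorem \ref{teo} the code $C^\perp=\langle h^R(x)\rangle$ is a reversible DNA code.

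\textbf{Odd case.} If $m$ is odd, Theorem \ref{teo1} allows us to replace $g(x)$ (up to a unit scalar, which does not affect the generated ideal) by a $\theta$-palindromic generator of the same degree. Theorem \ref{teo4} then gives that $h(x)$ is palindromic, and $\deg h(x)=n-m$ is odd. Theorem \ref{teo5}(1) then shows $h^R(x)$ is $\theta$-palindromic of odd degree, and the ``if'' direction of Theorem \ref{teo1} concludes that $C^\perp=\langle h^R(x)\rangle$ is a reversible DNA code.

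The argument is essentially a bookkeeping exercise, so I do not expect a serious obstacle; the one point that warrants care is the legitimacy of replacing $g(x)$ by a scalar multiple in the odd case (Theorem \ref{teo1} only guarantees that a $\theta$-palindromic generator exists, not that the minimal-degree monic generator is itself $\theta$-palindromic). This is justified by the observation, already used in Section \ref{32}, that $\langle g(x)\rangle=\langle \beta g(x)\rangle$ for any $\beta\in F_q^\ast$, so the choice of generator is immaterial for applying Theorem \ref{teo4}. Apart from this, the proof is a direct concatenation of the three results above in each parity case.
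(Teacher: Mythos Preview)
Your proof is correct and matches the paper's own argument essentially step for step: the paper also splits into the two parity cases and chains Theorem~\ref{teo}/Corollary~\ref{coro1}/Theorem~\ref{teo5}(2)/Theorem~\ref{teo} in the even case and Theorem~\ref{teo1}/Theorem~\ref{teo4}/Theorem~\ref{teo5}(1)/Theorem~\ref{teo1} in the odd case, handling the scalar-multiple subtlety exactly as you do. Your explicit tracking of the parity of $\deg h(x)$ is a welcome clarification that the paper leaves implicit.
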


\begin{proof}
	Let $C$ be a skew cyclic code and $g(x)$ be a nonzero polynomial of minimal degree in $C$. Then $C$ is generated by $g(x)$ and  $g(x)$ is a right divisor of $x^n-1$ in $F_{q}[x;\theta]$ (\cite{skew cyclic}). 
	Suppose that $C$ is a reversible DNA code. Then we have two cases:
	\begin{enumerate}
		\item If $deg(g(x))$ is an  even integer,  then by Theorem \ref{teo} we have $g(x)$ is a palindromic polynomial. If $x^n-1=h(x)g(x)$ in $F_{q}[x;\theta]$, then as a result of Corollary \ref{coro1} and Theorem \ref{teo5}; $h^R(x)$ is a palindromic polynomial. Hence $C^\perp= \langle h^R(x) \rangle$ is a reversible DNA code, by Theorem \ref{teo}.

		\item If $deg(g(x))$ is an  odd integer,  then by Theorem \ref{teo1}; $C$ can be generated by a   $\theta$-palindromic polynomial $g'(x)$ which is a scalar multiple of $g(x)$. Therefore $g'(x)$ is also a right divisor of $x^n-1$ in $F_{q}[x;\theta]$. If $x^n-1=h'(x)g'(x)$
		in $F_{q}[x;\theta]$, then as a result of Theorem \ref{teo4} and Theorem \ref{teo5}; $h'^R(x)$ is a $\theta$-palindromic polynomial. Hence $C^\perp= \langle h'^R(x) \rangle$ is a reversible DNA code, by 		Theorem \ref{teo1}.
	\end{enumerate}\end{proof}
\begin{example} Let $\alpha$ be the primitive element of $F_{16}$ where $\alpha^4=\alpha+1$. Then, $x^6-1=h(x)g(x)=(1+\alpha^7x+\alpha^{7}x^2+x^3)(1+\alpha^7x+\alpha^{13}x^2+x^3)$ in $F_{16}[x;\theta]$.
Since the degree of $g(x)$ is odd and it is a $\theta$-palindromic polynomial thus the skew cyclic code $C=\langle g(x)\rangle$ is a reversible DNA code over $F_{16}$ with the parameters $[6,3,4]$ (which is an optimal code with respect to the Singleton bound).

$C^\perp$ is also a skew cyclic code with the generating polynomial $h^R(x)=1+\alpha^{13}x+\alpha^{7}x^2+x^3$ which is a $\theta$-palindromic polynomial,  so $C^\perp$ is a reversible DNA code.
\end{example}

\begin{example}Let $\alpha$ be the primitive element of $F_{16}$ where $\alpha^4=\alpha+1$. Then,
$$x^{10}-1=h(x)g(x)=(1+\alpha x+\alpha^3 x^2+\alpha x^3+x^4)(1+\alpha x+\alpha^{11} x^2+\alpha^{11} x^4+\alpha x^5+x^6)$$ in $F_{16}[x;\theta]$.
Since the degree of $g(x)$ is even and it is a palindromic polynomial, then the skew cyclic code $C=\langle g(x)\rangle$ is a reversible DNA code of length $10$ over $F_{16}$.

The dual code of $C$ is the skew cyclic code $C^\perp=\langle h^R(x)\rangle =\langle (1+\alpha^{4}x+\alpha^{3}x^2+\alpha^{4}x^3+x^4) \rangle$. Since $h^R(x)$ is a palindromic polynomial of even degree, then $C^\perp$ is also a reversible DNA code.
\end{example}
\subsection{Odd length case}


In \cite{fq+vfq}, factors of $x^n-1$ in $F_q[x;\theta]$ are determined for the case $(n,m)=1$, where $m$ is the order of $\theta$. In our case $q=4^{2s}$ and the order of $\theta$ is $2$, so the following lemma is a direct consequence of  Lemma 2 in \cite{fq+vfq}.
\begin{lemma}
  Let $g(x)$ be a right divisor of $x^n-1$ in $F_{4^{2s}}[x;\theta]$, where the order of $\theta$ is $2$ and $n$ is an odd number. Then, $g(x)$ is a polynomial over $F_{4^s}$. Moreover the factorization of $x^n-1$ in $F_{4^{2s}}[x;\theta]$ is same as the factorization of $x^n-1$ in the commutative ring $F_{4^s}[x]$.
\end{lemma}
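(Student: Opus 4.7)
The plan is to invoke Lemma 2 of \cite{fq+vfq} directly, since the statement is announced as a consequence of it. The coprimality hypothesis of that lemma holds because $\theta$ has order $2$ and $n$ is odd, so $\gcd(n,2)=1$, and the fixed field of $\theta$ is exactly $F_{4^s}$. The cited result then yields at once that every monic right divisor $g(x)$ of $x^n-1$ in $F_{4^{2s}}[x;\theta]$ has all its coefficients in $F_{4^s}$.

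To expose the mechanism, I would introduce the map $\Theta: F_{4^{2s}}[x;\theta]\to F_{4^{2s}}[x;\theta]$ defined by $\Theta(\sum a_i x^i) = \sum \theta(a_i)x^i$. A short verification, using crucially that $\theta^2=\mathrm{id}$, shows that $\Theta$ is a ring automorphism, because $\Theta(xa) = \theta(\theta(a))x = ax = \Theta(x)\Theta(a)$. Clearly $\Theta(x^n-1)=x^n-1$. So if $x^n-1 = h(x)g(x)$ with $g(x)$ monic of degree $t$, applying $\Theta$ gives $x^n-1=\Theta(h(x))\Theta(g(x))$, exhibiting $\Theta(g(x))$ as another monic right divisor of the same degree. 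The substantive content is that, under the coprimality hypothesis, such a monic right divisor is unique; this forces $\Theta(g(x))=g(x)$, so each coefficient of $g(x)$ is $\theta$-fixed and lies in $F_{4^s}$. Since $\theta$ acts trivially on $F_{4^s}$, the skew multiplication on $F_{4^s}[x;\theta]$ reduces to ordinary multiplication, and therefore $g(x) \in F_{4^s}[x]$.

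The second assertion is immediate from the first: every monic right factor of $x^n-1$ in the skew ring actually lies in $F_{4^s}[x]$, on which skew and commutative multiplication coincide, and every commutative factorization is automatically a skew one. Hence the two factorization lattices match, so the same distinct-irreducible decomposition $x^n-1=f_1(x)\cdots f_r(x)$ in $F_{4^s}[x]$ (separable since $\gcd(n,2)=1$) serves as the factorization in $F_{4^{2s}}[x;\theta]$ as well.

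The main obstacle is the uniqueness step for monic right divisors of each prescribed degree. This is precisely where the coprime-order condition genuinely enters, and it fails in the even-length setting treated in the previous subsection (where palindromic and non-palindromic generators of the same degree can both right-divide $x^n-1$). I would rely on Lemma 2 of \cite{fq+vfq} for this point rather than reprove it here.
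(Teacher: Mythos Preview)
Your core move---invoking Lemma~2 of \cite{fq+vfq}---is exactly what the paper does; it gives no argument beyond that citation, so at that level you match it.

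The mechanistic sketch you add, however, has a real gap. The claim that a monic right divisor of $x^n-1$ of a prescribed degree is \emph{unique} under the coprimality hypothesis is simply false, and the paper's own example refutes it: in $F_{16}[x;\theta]$ one has $x^5-1=(x-1)(x^2+\alpha^5 x+1)(x^2+\alpha^{10}x+1)$, exhibiting two distinct monic right divisors of degree~$2$. So whatever Lemma~2 of \cite{fq+vfq} actually establishes, it is not the degree-wise uniqueness you attribute to it, and your deduction $\Theta(g(x))=g(x)$ does not go through as written. The map $\Theta$ is the right tool, but the uniqueness you need is at the level of the \emph{code}, not of the degree: since $C=\langle g\rangle$ is skew cyclic and (by the coprimality result of \cite{I.siap}) also ordinary cyclic, it is closed under coordinatewise~$\theta$, i.e.\ $\Theta(C)=C$; then $\Theta(g)$ and $g$ are both monic minimum-degree elements of the \emph{same} left ideal, which forces $\Theta(g)=g$ and hence $g\in F_{4^s}[x]$.
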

As a result of this lemma; if $C$ is a skew cyclic code of odd length over $F_{4^{2s}}$ with respect to the automorphism $\theta$, then $C$ is an ordinary cyclic code over $F_{4^{2s}}$ generated by a polynomial with coefficients from $F_{4^s}$.
$F_{4^s}$ is the fixed subfield of $F_{4^{2s}}$ under $\theta$, since $\theta(\beta)=\beta^{4^s}=\beta$ for all $\beta\in F_{4^s}$. If $g(x)\in F_{4^{2s}}[x;\theta]$ is a palindromic polynomial with coefficients from $F_{4^s}$, then it is also a $\theta$-palindromic polynomial.

Following theorems can be easily proven by using  similar arguments  given in Subsection \ref{32}

\begin{theorem}
  Let $x^n-1=h(x)g(x)$ in $F_{4^{2s}}[x;\theta]$ where $n$ is odd. Then, the skew cyclic code  $C=\langle g(x) \rangle$ of length $n$  over $F_{4^{2s}}$ is a reversible DNA code if and only if $g(x)$ is a palindromic polynomial.
\end{theorem}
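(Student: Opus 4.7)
The plan is to reduce the proof to the template already established by Theorem \ref{teo} and Theorem \ref{teo1}. By the preceding lemma, since $n$ is odd and $\theta$ has order $2$, the divisor $g(x)$ of $x^n-1$ necessarily has all its coefficients in the fixed subfield $F_{4^s}$. This is the key structural simplification: $\theta$ acts as the identity on every coefficient of $g(x)$, which means $a_i^{4^s}=a_i$ throughout, and in particular ``palindromic'' and ``$\theta$-palindromic'' coincide for such polynomials. Thus a single condition, palindromicity, handles both the even-degree and odd-degree subcases of $g(x)$ in one statement.

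For the forward direction, I would assume $g(x)$ is palindromic and write an arbitrary codeword as $c(x)=\sum_{i=0}^{k-1}\beta_i\, x^i g(x)$ with $k=n-\deg g(x)$. Replicating the computation used in Theorem \ref{teo}, the DNA reverse of $\phi(c)$ equals $\phi\bigl(\sum_{i=0}^{k-1}\theta(\beta_i)\,x^{k-1-i}g(x)\bigr)$; because $g(x)$ is palindromic and has $\theta$-fixed coefficients, the reverse of $x^i g(x)$ aligns with $x^{k-1-i}g(x)$ up to the scalar twist $\theta(\beta_i)$. Since the resulting polynomial lies in $C=\langle g(x)\rangle$, $C$ is a reversible DNA code.

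For the converse, I would let $a(x)=g_m^{-1}g(x)=a_0+a_1x+\dots+a_{m-1}x^{m-1}+x^m$ be the monic polynomial of minimal degree in $C$. Because the coefficients of $a(x)$ lie in $F_{4^s}$, we have $\theta(a_i)=a_i$, so $x$ commutes with each $a_i$ in the skew polynomial ring. Consequently $c_1(x)=x^{n-m-1}a(x)$ has sequence form $(0,\dots,0,a_0,a_1,\dots,a_{m-1},1)$, with no $4^s$-th power twist on the coefficients. Its DNA reverse, again benefiting from $a_i^{4^s}=a_i$, is $c_2(x)=1+a_{m-1}x+\dots+a_1x^{m-1}+a_0x^m\in C$. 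Then $c_3(x)=a(x)-a_0^{-1}c_2(x)$ has degree at most $m-1$ and lies in $C$; by minimality it must vanish, forcing $a_0^2=1$ (hence $a_0=1$ in characteristic $2$) and $a_i=a_{m-i}$ for all $i$. So $a(x)$, and therefore $g(x)$, is palindromic.

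The main potential obstacle that appeared in Theorem \ref{teo1}, namely converting the condition $a_0^{4^s+1}=1$ into a $\theta$-palindromic form by rescaling with a suitable $\alpha^j$, is dissolved here: the fixed-field constraint forces $a_0$ to be a square root of $1$, which in characteristic $2$ is simply $1$. The only care point I foresee is justifying that $x$ commutes with every coefficient of $a(x)$ inside the skew ring, which follows directly from $F_{4^s}$ being the fixed subfield of $\theta$; once that is noted, the rest is a straightforward transcription of the minimality argument used in Theorems \ref{teo} and \ref{teo1}.
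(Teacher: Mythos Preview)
Your proposal is correct and follows precisely the route the paper intends: the paper gives no explicit proof, stating only that the result ``can be easily proven by using similar arguments given in Subsection~\ref{32}'', and your argument does exactly that, invoking the preceding lemma to place the coefficients of $g(x)$ in the fixed subfield $F_{4^s}$ so that the proofs of Theorems~\ref{teo} and~\ref{teo1} collapse into a single palindromic case. Your observation that the fixed-field constraint forces $a_0^2=1$ (hence $a_0=1$), dissolving the rescaling step from Theorem~\ref{teo1}, is the right simplification and makes the parity of $\deg g(x)$ irrelevant, which is why the paper can state a single unified theorem here.
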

\begin{theorem}
Let $C$ be a skew cyclic code generated by a divisor  of $x^n-1$ in $F_{4^{2s}}[x;\theta]$ where n is odd. If $C$ is a reversible DNA code, then its dual code $C^\perp$ is also a reversible DNA code. 
\end{theorem}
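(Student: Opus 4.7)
The strategy is to reduce to the commutative setting and then mimic the argument used for even lengths in Corollary \ref{lem4}. First I would invoke the lemma preceding the statement to note that both $g(x)$ and its cofactor $h(x)$ in the factorization $x^n-1=h(x)g(x)$ lie in the commutative subring $F_{4^s}[x]$, since the entire factorization of $x^n-1$ in $F_{4^{2s}}[x;\theta]$ already takes place over the fixed subfield. In particular $C$ is an ordinary cyclic code, its dual $C^\perp$ is the cyclic code generated by the classical reciprocal $h^R(x)=x^{\deg h}h(1/x)$, and this classical reciprocal coincides with the skew reciprocal of $h(x)$ because $\theta$ acts trivially on its coefficients.

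Next, because $C$ is assumed to be a reversible DNA code, the preceding theorem provides a palindromic generator $g(x)$ of $C$. I would then exploit multiplicativity of the reciprocal on the commutative ring $F_{4^s}[x]$: since $n$ is odd and the characteristic is $2$, the polynomial $x^n-1$ is separable, so both $g(0)$ and $h(0)$ are nonzero, and hence $(x^n-1)^R=g^R(x)h^R(x)$ with no drop in degree. A direct computation gives $(x^n-1)^R=x^n-1$ in characteristic $2$, while the palindromic hypothesis yields $g^R(x)=g(x)$. Cancelling $g(x)$ in the commutative factorization gives $h^R(x)=h(x)$, so $h^R(x)$ is itself palindromic and still has its coefficients in the fixed subfield $F_{4^s}$.

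Finally, applying the preceding theorem in the converse direction to the palindromic generator $h^R(x)$ of $C^\perp$, I conclude that $C^\perp$ is a reversible DNA code. The one delicate point worth making explicit is the identification $C^\perp=\langle h^R(x)\rangle$: Lemma \textbf{1} of the preliminaries requires that the order of $\theta$ divide $n$, which fails for odd $n$, but since the reduction above has already recast $C$ as an ordinary cyclic code over $F_{4^{2s}}$ generated by a polynomial in $F_{4^s}[x]$, the duality description is just the classical cyclic-code duality theorem; and because $\theta$ fixes the coefficients of $h(x)$, the classical and skew reciprocals coincide, so the two descriptions of $C^\perp$ agree.
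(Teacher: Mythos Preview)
Your argument is correct. The paper does not actually write out a proof here; it merely says the result ``can be easily proven by using similar arguments given in Subsection~\ref{32}.'' Taken literally, that would mean reproducing the coefficient-by-coefficient induction of Theorem~\ref{lem1}/Corollary~\ref{coro1} (to pass from $g$ palindromic to $h$ palindromic) followed by the analogue of Theorem~\ref{teo5} (to see that $h^R$ is again palindromic), all specialized to the commutative setting.

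Your route is different and slicker: having reduced to $F_{4^s}[x]$, you use the multiplicativity of the classical reciprocal, $(hg)^R=h^Rg^R$, together with $(x^n-1)^R=x^n-1$ in characteristic~$2$ and $g^R=g$, to cancel and obtain $h^R=h$ directly. This replaces two inductive lemmas by a one-line algebraic identity, at the cost of being specific to the commutative (odd-$n$) situation; the paper's inductive machinery, by contrast, was built to handle the genuinely non-commutative even-$n$ case and is simply being recycled here. Your remark about the identification $C^\perp=\langle h^R(x)\rangle$ via classical cyclic-code duality, and the coincidence of the classical and skew reciprocals on $F_{4^s}[x]$, is a nice point that the paper glosses over. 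One minor quibble: separability of $x^n-1$ is not really what gives $g(0),h(0)\neq 0$; the relevant fact is just that $x\nmid x^n-1$.
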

\begin{example}
Let $\alpha$ be a primitive element of $F_{16}$. Then, the fixed subfield under $\theta$ is $F_4=\{0,1,\alpha^5,\alpha^{10}\}$.
  $$x^5-1=(x-1)(x^2+\alpha^5x+1)(x^2+\alpha^{10}x+1)\text{in } F_{16}[x;\theta]$$
  Let $ g(x)=x^2+\alpha^{10}x+1$ then the skew cyclic code $C=\langle g(x)\rangle$ is both a reversible DNA code and a reversible code over $F_{16}$ with the parameters [5,3,3]. Here, $h(x)=h^R(x)=x^3+\alpha^{10}x^2+\alpha^{10}x+1$. Since $h^R(x)$ is a palindromic polynomial, then the dual code  $C^\perp=\langle h^R(x) \rangle$ is a reversible DNA code.
\end{example}
\section{Conclusion}\label{sec_conc}
For the first time, to the best knowledge of the authors, DNA codes over non commutative rings are explored here. The non commutativity  property gives a very suitable presentation for such codes. Different from the previous studies (\cite{ise,Gse}), here we introduce a more algebraic approach provided by the skewness of the ring in studying DNA codes. Further studies as in the case for commutative rings are still open and interesting problems. For instance; GC content of DNA codes, DNA codes with respect to the other distances (edit distance, similarity distance etc.) awaits to be explored.

\textbf{Acknowledgement:} The authors wish to thank the anonymous reviewers for their valuable remarks that led to an improved presentation of our original paper.

\end{document}